%% file: FormalStability_arxiv.tex
\documentclass[12pt,a4paper]{amsart}

\usepackage{standard}
\usepackage{natbib}
\usepackage{microtype}
\newcommand{\Verification}{Verification}

\begin{document}
\title{Foundations of Machine-Checked Control Theory in Lean}

\author[M. Doll]{Moritz Doll}
\address{Department of Electrical and Electronic Engineering \\ University of Melbourne \\ Parkville, VIC-3010, Australia}
\email{moritz.doll@unimelb.edu.au}

\author[I. Shames]{Iman Shames}
\address{Department of Electrical and Electronic Engineering \\ University of Melbourne \\ Parkville, VIC-3010, Australia}
\email{iman.shames@unimelb.edu.au}

\input{abstract}

\maketitle

\input{FormalStability_main}

\bibliographystyle{abbrv}
\bibliography{leanbib}

\end{document}

%% file: abstract.tex
\begin{abstract}
We introduce an open-source library for machine-checked control theory in the
interactive proof assistant Lean to lay foundations for the verification of
cyber-physical systems.
To this end, as representative theorems, we present
formalizations of Lyapunov stability theory and the small-gain theorem.
First,
the machinery employed for formalizing Lyapunov stability, i.e., neighborhood
filters, allows stating a Lyapunov theorem that covers both points and sets and applies to
continuous, discrete, and hybrid systems.
Second, the small-gain theorem is
proved via stating input-output systems as relations without the
usual well-posedness assumption.
The Lean formalization of each of these
theorems is then presented.
We conclude by discussing the library architecture
and mentioning some of the other system theoretic results that are formalized in
the library along with future plans.
\end{abstract}

%% file: FormalStability_main.tex
\section{Introduction}
\Verification{} of cyber-physical systems using proof assistants
has gained considerable interest in recent years.
However, the use of proof
assistants has been limited by the available formalized mathematics and most
foundational implementations have only been able to formalize certain aspects,
such as LaSalle's invariance principle~\cite{CoRo} or the small-gain
theorem~\cite{JaVe}.
The recent interest of mathematicians in interactive proof
assistants, in particular Lean~\cite{Lean} and its mathematical library
\mathlib~\cite{Mathlib}, provides an opportunity to build a comprehensive
library of formalized control theory.
Moreover, the success of \mathlib has
sparked the development of a similarly designed library for computer science,
CSLib~\cite{CSLib}.
While CSLib is only in its early stage of development, it
aims to provide tools to verify real-world computer programs.

We introduce a library of formalized dynamical systems and control theory that
aims to build the foundations for verification of cyber-physical systems.

To keep the presentation self-contained, we recall
the ideas behind Lean, the role of tactics, and what constitutes a formal
proof.
Lean is built on dependent type theory
%(a variant of the calculus of inductive constructions)
and rests on the Curry--Howard correspondence: every
proposition is represented as a type, and a proof of that proposition is a term
inhabiting that type \cite{Wadler}.
Verifying a proof therefore reduces to type-checking a
term, a task delegated to a small, trusted kernel, so correctness depends only
on this minimal core and not on the much larger elaboration and automation
machinery layered above it.
Lean is at the same time a general-purpose
functional programming language, so its own automation is written in Lean
itself, and \mathlib supplies not only a large base of formalized definitions and
results, but also domain-specific automation.

Constructing such proof terms by hand is tedious, so proofs are usually built
with tactics, commands that manipulate the proof state.
At any point this state
consists of one or more goals, each a target type to be inhabited together with
a local context of hypotheses, and a tactic either closes a goal or replaces it
with simpler subgoals.
For instance, \lstinline{intro} moves a hypothesis from
the goal into the context, \lstinline{apply} reasons backward through a lemma,
\lstinline{exact} supplies a term outright, and \lstinline{rw} and
\lstinline{simp} rewrite and simplify, while more sophisticated tactics like
\lstinline{ring} and \lstinline{linarith} prove equalities in rings and linear inequalities,
respectively.
Tactics are themselves Lean programs, so users can define new ones,
yet every tactic block still elaborates to a proof term that the kernel checks,
so this automation adds convenience without compromising correctness.

A formal proof in Lean is therefore, in the end, a term whose type is the
statement being proved.
A theorem declaration consists of a name, a statement
expressed as a type with its quantified variables and hypotheses appearing as
parameters, and a body that produces a term of that type.
The body is given
either directly as a term or, more commonly, through a sequence of tactics,
drawing on previously established definitions and lemmas.
Once elaborated, the
complete term is passed to the kernel for type-checking, and its acceptance is
exactly the guarantee that the stated result holds.

While Lean checks the correctness of \emph{theorems}, it should be noted
that it does not provide any guarantees about definitions.
In particular, by a slight
change to a definition a wrong statement might become provable in Lean.
It is therefore
of utmost importance that the definitions are formalized faithfully.

Safety arguments for cyber-physical systems rest on stability certificates: a
Lyapunov function that keeps the state bounded, a forward-invariant safe set the
state cannot leave, or a finite gain bound that limits how much a subsystem
amplifies disturbances.
For safety, a Lean formalization represents these
certificates as precise, reusable objects, each paired with a theorem that
states exactly when it implies the stability or invariance property, including
the set and interconnection cases that textbook definitions often omit.
For
trust, it changes what must be believed.
The certificate is re-checked by Lean's
small kernel against the formal definitions.
Its validity therefore no longer
depends on a numerical solver's floating-point arithmetic or a hand
proof.
The check is reproducible and is re-run automatically whenever
the system changes.

In this paper, we introduce the library and, taking Lyapunov stability and the
small-gain theorem as two representative examples, outline how such results are
formalized in Lean and which mathematical concepts the formalization relies on.
To this end, for each result, we first introduce the mathematical machinery
needed to understand its formalization.
Then drawing from this insight, we state
potentially new proofs and show how they are formalized.

While this library is to our knowledge the first formalization of stability
theory in Lean, it is far from the first formalization of stability theory in
any theorem prover.
In fact, Cohen--Rouhling~\cite{CoRo} formalized LaSalle's
invariance principle in Rocq (formerly Coq), and Jasim--Veres~\cite{JaVe} formalized the
small-gain theorem in Isabelle/HOL.
We also mention VeriDrone~\cite{VeriDrone} and KeYmaeraX~\cite{KeYmaeraX} that
formalizes dynamical systems and control theory.
However, KeYmaeraX uses
so-called differential dynamic logic and external tools such as Mathematica
and SMT solvers.
VeriDrone is built in Rocq, but also outsources some of the
proofs to SMT solvers such as Z3.

The Lean approach of this paper enjoys three advantages.
First, \mathlib is a single integrated foundation, so Lyapunov
theory and input-output theory share the same definitions and compose without a
translation layer, whereas Rocq's analysis is split across competing libraries 
and Isabelle's is spread across HOL-Analysis and the federated AFP.
Second, \mathlib's
abstractions (filters, relations, arbitrary index sets) let one statement cover
points and sets, continuous, discrete, and hybrid systems, and even drop
well-posedness in the small-gain theorem, so a general library replaces the
one-theorem developments.
Third, everything is checked by Lean's small kernel,
so the trusted base is the kernel and the definitions, unlike the tools that
must trust, or reconstruct, the output of Z3 or Mathematica.

Beyond the mathematics, Lean is also a
programming language.
Thus, control-specific automation and kernel-checked,
CI-verified documentation (Verso) live in the same system.
The contributor base
is large and active.
Gaps the library fills can be pushed upstream to benefit
everyone, and the sibling CSLib effort points toward verified embedded
controllers within one ecosystem.
This paper along with the proposed library lays the foundation for a general,
trusted, and extensible control-theory library that prior efforts, by
construction, do not provide.

\section{Fundamental stability results in Lean}
The library \cite{LeanDynamicalSystems} is hosted on GitHub.
The two main parts of the library are currently Lyapunov stability theory and input-output
stability.
We will give an outline of the topics covered and the specific design decisions that
differentiate the formalization from a usual textbook presentation.

\subsection{Stability of dynamical systems}
Lyapunov stability is a classical topic in control theory.
We start by recalling the typical presentation found in textbooks such as
Khalil~\cite{Khalil} and then describe the path taken for formalization in Lean.

We start by considering the initial value problem of an autonomous system,
\begin{equation}
        \dot{x}(t) = f(x)\,,\quad 
        x(0) = \bar{x}_0\,,
\end{equation}
where $f : \RR^n \to \RR^n$ is locally Lipschitz continuous and $\bar{x}_0 \in \RR^n$.
Then by the Picard--Lindelöf theorem there exists a unique solution $x :
(-\eps_0, \eps_1) \to \RR^n$.
We will assume that solutions exist for all time and denote the solution map by
$\Phi : \RR \times \RR^n \to \RR^n$.
If $f(x_0) = 0$, then the solution is given by $\Phi(t, x_0) = x_0$.
\begin{definition}\label{def:lyapunov}
    The fixed point $x_0$ is called \emph{Lyapunov stable} if for every $\eps >
    0$ there exists a $\delta > 0$ such that for all $x$ with $d(x, x_0) < \delta$,
    $d(\Phi(t, x), x_0) < \eps$ for all $t \geq 0$, where $d(x, y) = \norm{x - y}$ denotes the
    Euclidean distance.

    The fixed point is called \emph{asymptotically stable} if it is Lyapunov stable and there
    exists a neighborhood of $x_0$ such that all trajectories starting in that neighborhood
    converge to $x_0$.
\end{definition}
Lyapunov's direct method gives a criterion to check stability: if there exists a
function $V : \RR^n \to \RR$ that is continuous, $V(x_0) = 0$ and $V(x)>0$ for $x
\ne x_0$, and $V$ is non-increasing (decreasing) along the
flow, then $x_0$ is stable (asymptotic stable).
If $V$ is differentiable, one
can check the decrease along the flow by calculating the Lie derivative: $\pa_t
V(\Phi(t,x)) = \ang{\pa_x V(\Phi(t,x)), f(\Phi(t,x))}$.
In particular, if
$\ang{\pa_x V(x), f(x)} < 0$ for all $x \not = x_0$, then it follows that $x_0$
is asymptotic stable.
In the case that $\ang{\pa_x V(x), f(x)} \leq 0$, under
some circumstances it is still possible to deduce asymptotic stability by
invoking LaSalle's invariance principle.

A good formalization has to be sufficiently general so that it can be applied in various situations
and we immediately observe that the above definition of stability is unnecessarily restrictive.
The above definition does not apply to discrete dynamical systems, such as iterated function
systems, and does not cover stability of sets.
In order to generalize the definition to cover both
continuous and discrete systems, we note that the only important object coming from the
differential equation is the solution operator $\Phi$.
In particular, we can take an arbitrary $\Phi : \iota \times X \to X$ as long as we assume that
$\iota$ is a preordered set and $X$ is a topological space.
The generalization to cover both stability of points and sets is more involved.
The first step to observe is that Lyapunov stability is qualitative in the sense that it does not
specify how large $\delta$ is in comparison to $\eps$.
Hence, using standard point set topology, one can rephrase stability as for every neighborhood $U$
of $x_0$ there exists a neighborhood $W$ of $x_0$ such that for all $t$ and $x \in W$, it follows
that $\Phi(t,x) \in U$.
This definition can be generalized even further to use \emph{filters}.
\begin{definition}
    Let $X$ be any set.
    A \emph{filter} $\mathfrak{F}$ on $X$ is a set of subsets of $X$ such that
    \begin{itemize}
        \item every subset of $X$ that contains an element in $\mathfrak{F}$ belongs to
            $\mathfrak{F}$,
        \item the finite intersection of elements in $\mathfrak{F}$ belongs to $\mathfrak{F}$, and
        \item $X \in \mathfrak{F}$.
    \end{itemize}
\end{definition}
Note that Bourbaki~\cite{Bourbaki} does not assume the third condition, but assumes that the empty
set is not contained in $\mathfrak{F}$, which implies that $\mathfrak{F} \ne \mathcal{P}(X)$, where
$\mathcal{P}(X)$ is the power set of $X$.
\begin{definition}
    Let $\mathfrak{F}$ be a filter on $X$.
    A filter basis of $\mathfrak{F}$ is a family of subsets $\mathfrak{B}\subset \mathcal{P}(X)$
    such that for every $A \in \mathcal{P}(X)$ we have that
    $A \in \mathfrak{F}$ if and only if there exists a $B \in \mathfrak{B}$ such that $B \subset A$.
\end{definition}
If $X$ is a topological space and $x \in X$, then the \emph{neighborhood filter},
$\mathcal{N}(x)$ is given by all sets $U$ such that there exists an open subset $U_0$ with
$x \in U_0 \subset U$, that is, $\mathcal{N}(x)$ is the filter generated by open subsets containing
$x$.
This leads to the following notion of stability of a filter along a map:
\begin{definition}
    Let $\iota$ and $X$ be arbitrary sets, $I \subset \iota$, and $\Phi : \iota \times X \to X$.
    A filter $\mathfrak{F}$ on $X$ is called \emph{stable} along $\Phi$ on $I$ if for all
    $S \in \mathfrak{F}$ there exists an $S' \in \mathfrak{F}$ such that for all $t \in I$ and all
    $x \in S'$ it follows that $\Phi(t,x) \in S$.
\end{definition}
We note that another way of generalizing Lyapunov stability is by using set-valued maps, see
\cite{BiPa2023} and \cite{BiAn2025}.

In Lean this definition takes the form:

\begin{lstlisting}
variable {l : Filter X} {Φ : ι → X → X} {I : Set ι}

variable (Φ l I) in
def IsStableOn : Prop :=
  ∀ s ∈ l, ∃ s' ∈ l, ∀ t ∈ I, ∀ x ∈ s', Φ t x ∈ s
\end{lstlisting}

In the case that we take $l$ to be the neighborhood filter of $x$ and $I$ the
interval $\ico{0, \infty}$ we recover the typical textbook definition of
stability:

\begin{lemma}\label{lem:stable}
    Let $X$ be a metric space and $x_0 \in X$.
    The neighborhood filter $\mathcal{N}(x_0)$ is stable
    along $\Phi$ on $\ico{0,\infty}$ if and only if $x_0$ is Lyapunov stable.
\end{lemma}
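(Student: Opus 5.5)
The plan is to unfold both definitions and use that in a metric space the open balls $B(x_0,r)=\{x : d(x,x_0)<r\}$, $r>0$, form a filter basis of $\mathcal{N}(x_0)$. Concretely, a set $S$ belongs to $\mathcal{N}(x_0)$ if and only if there is an $\eps>0$ with $B(x_0,\eps)\subset S$. In the Lean formalization this is the lemma \lstinline{Metric.mem_nhds_iff}. Once this is in place, each implication is a direct translation between ``for every neighborhood'' and ``for every $\eps$''.

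For the direction ``stable $\Rightarrow$ Lyapunov stable'', I fix $\eps>0$. I apply the filter stability to $S=B(x_0,\eps)\in\mathcal{N}(x_0)$ and obtain $S'\in\mathcal{N}(x_0)$ with $\Phi(t,x)\in B(x_0,\eps)$ for all $t\ge 0$ and $x\in S'$. By the basis property there is a $\delta>0$ with $B(x_0,\delta)\subset S'$. This $\delta$ witnesses Definition~\ref{def:lyapunov}: if $d(x,x_0)<\delta$, then $x\in S'$, hence $d(\Phi(t,x),x_0)<\eps$ for all $t\in\ico{0,\infty}$.

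For the converse, let $S\in\mathcal{N}(x_0)$ and choose $\eps>0$ with $B(x_0,\eps)\subset S$. Lyapunov stability gives a $\delta>0$, and I take $S'=B(x_0,\delta)$, which lies in $\mathcal{N}(x_0)$ because open balls are neighborhoods of their center. For $t\ge 0$ and $x\in S'$ we then get $\Phi(t,x)\in B(x_0,\eps)\subset S$.

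There is no real obstacle here. The only point needing care is matching conventions. The textbook definition quantifies over $t\ge 0$, whereas the filter definition quantifies over $t\in I=\ico{0,\infty}$; these agree by unfolding membership in \lstinline{Set.Ici 0}. The balls are written with $d(x,x_0)$ on the textbook side but with \lstinline{dist x x₀} on the Lean side, which may need a symmetry rewrite \lstinline{dist_comm}. Note also that the fixed-point hypothesis $f(x_0)=0$ is not used, so the lemma holds for an arbitrary $\Phi$ and an arbitrary point $x_0$.
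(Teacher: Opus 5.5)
Your proof is correct and follows the paper's argument: the paper's one-line proof cites that the open balls of positive radius around $x_0$ form a basis of $\mathcal{N}(x_0)$, and you carry out both directions of the resulting translation between neighborhoods and $\varepsilon$--$\delta$ quantifiers. Your side remarks are also accurate: the condition $f(x_0)=0$ is never used, and the only remaining care is matching $t\in[0,\infty)$ with $t\ge 0$ and the argument order in the distance.
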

\begin{proof}
    This follows directly from the fact that open balls of positive radius around $x_0$ form a
    basis of the neighborhood filter.
\end{proof}
In Lean this equivalence is formalised as the following where the interval $\ico{0, \infty}$ is
denoted by \lstinline{Ici 0}.
\begin{lstlisting}
example (x₀ : X) : (𝓝 x₀).IsStableOn Φ (Ici 0) ↔
    ∀ ε > 0, ∃ δ > 0, ∀ t ≥ 0, ∀ x,
    d(x, x₀) < δ → d(Φ t x, x₀) < ε
\end{lstlisting}
The main advantage of the definition of stability in terms of an arbitrary filter is that it
applies equally well in the case of stability to a compact subset $K \subset X$.
The relevant filter is the neighborhood filter of sets, $\mathcal{N}^s(K)$ defined as the
collection of all sets that contain an open set $s_0$ with $K \subset s_0$.
Similarly, we obtain a simple description of stability in this case:
\begin{lemma}\label{lem:stable_set}
    Let $X$ be a metric space, $K \subset X$ compact, and $K \ne \emptyset$.
    The neighborhood filter $\mathcal{N}^s(K)$ is stable along $\Phi$ on $\ico{0,\infty}$ if and
    only if for all $\eps > 0$ there exists $\delta > 0$ such that for all $t \geq 0$ and $x \in X$
    with $d(x, K) < \delta$ it follows that $d(\Phi(t,x), K) < \eps$, where
    $d(x, K) \coloneqq \inf_{y \in K} d(x, y)$ is the distance between a point and a set.
\end{lemma}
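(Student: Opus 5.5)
The plan mirrors the proof of Lemma~\ref{lem:stable}: show that the thickenings $K_\delta \coloneqq \{x \in X : d(x,K) < \delta\}$, $\delta > 0$, form a basis of $\mathcal{N}^s(K)$, and then unfold the definition of stability along $\Phi$ against this basis. The first step is a general observation about filter bases. If $\mathfrak{B}$ is a basis of $\mathfrak{F}$, then $\mathfrak{F}$ is stable along $\Phi$ on $I$ if and only if for every $B \in \mathfrak{B}$ there is $B' \in \mathfrak{B}$ with $\Phi(t,x) \in B$ for all $t \in I$ and $x \in B'$.

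For the forward direction, apply stability to $S = B$. This gives some $S' \in \mathfrak{F}$, and $S'$ contains some $B' \in \mathfrak{B}$; shrinking $S'$ to $B'$ keeps the conclusion. For the converse, given $S \in \mathfrak{F}$, choose $B \subset S$ in $\mathfrak{B}$. The hypothesis yields $B'$, and $B' \in \mathfrak{F}$. With the basis $\{K_\delta\}_{\delta>0}$, the basis condition reads literally as the $\eps$--$\delta$ statement in Lemma~\ref{lem:stable_set}, with $B = K_\eps$ and $B' = K_\delta$.

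The main step is showing that $\{K_\delta\}_{\delta > 0}$ is a basis of $\mathcal{N}^s(K)$. One inclusion is easy. The map $x \mapsto d(x,K)$ is $1$-Lipschitz, hence continuous, so $K_\delta$ is open. It contains $K$ because $d(y,K) = 0$ for $y \in K$. Therefore $K_\delta \in \mathcal{N}^s(K)$, and so does every superset.

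For the other inclusion, let $U \in \mathcal{N}^s(K)$ and choose an open $s_0$ with $K \subset s_0 \subset U$. We need $\delta > 0$ with $K_\delta \subset s_0$, and this is where compactness is essential. I would argue as follows. The function $y \mapsto d(y, X \setminus s_0)$ is continuous, and it is strictly positive on $K$ because $s_0$ is open. If $s_0 = X$ the claim is trivial, so assume $X \setminus s_0 \ne \emptyset$. Since $K$ is compact and nonempty, this function attains a positive minimum $\delta$ on $K$.

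Now take $x$ with $d(x,K) < \delta$. There is $y \in K$ with $d(x,y) < \delta \le d(y, X\setminus s_0)$, hence $x \notin X \setminus s_0$, that is, $x \in s_0$. In Lean I expect this Lebesgue-number-type fact to be available in \mathlib in the form of a statement that thickenings of a compact set form a basis of $\mathcal{N}^s(K)$. The hypothesis $K \ne \emptyset$ is used so that $d(\cdot,K)$ is finite and the infimum behaves as intended. It also guarantees that the minimum exists.
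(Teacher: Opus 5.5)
Your proposal is correct and takes the same route as the paper, which also reduces the lemma to the fact that the thickenings $K_\delta$ form a basis of $\mathcal{N}^s(K)$ for compact $K$ and then unfolds stability against this basis as in Lemma~\ref{lem:stable}. You make explicit the two steps the paper leaves implicit: transferring stability to a filter basis, and the compactness argument via the positive minimum of $d(\cdot, X \setminus s_0)$ on $K$.
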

\begin{proof}
    The argument is the same as for Lemma~\ref{lem:stable} noting that the sets
    $K_\delta \coloneqq \set{x \in X \colon d(x, K) < \delta}$ form a basis for $\mathcal{N}^s(K)$
    if $K$ is compact.
\end{proof}
In Lean, this lemma takes the form:
\begin{lstlisting}[mathescape]
example (hK : IsCompact K) (hK' : K.Nonempty) :
    (𝓝${}^s$ K).IsStableOn Φ (Ici 0) ↔
    ∀ ε > 0, ∃ δ > 0, ∀ t ≥ 0, ∀ x,
    infDist x K < δ → infDist (Φ t x) K < ε
\end{lstlisting}
\begin{definition}
    A topological space $X$ is called \emph{first-countable} if for every $x$ the neighborhood
    filter $\mathcal{N}(x)$ admits a countable basis.
\end{definition}
In particular, every metric space is first-countable.

\begin{theorem}
    Let $X$ be a first-countable topological space and $I$ be a preordered set and fix $t_0 \in I$.
    Let $\Phi : I \times X \to X$ be any map such that $\Phi(t_0, x) = x$ for
    all $x \in X$ and $K \subset X$.
    If there exists a  function $V : X \to \RR$ such that
    \begin{enumerate}[label=(\emph{\roman*}), ref=(\emph{\roman*})]
        \item\label{ass_item:cont} $V$ is continuous,
        \item\label{ass_item:stationary} $V(x) \geq 0$ for all $x \in X$ and $V(x) = 0$ if and only
            if $x \in K$, 
        \item for each $x$ the map $t \mapsto V(\Phi(t, x))$ is non-increasing,
        \item\label{ass_item:sublevel} there exists $\delta_0 > 0$ such that the sublevel set
            $\set{x \in X \colon V(x) \leq \delta_0}$ is compact,
    \end{enumerate} 
    then the set $K$ is forward Lyapunov stable under $\Phi$, i.e., $\mathcal{N}^s(K)$ is
    stable along $\Phi$ on $\{t \in I : t \geq t_0\}$.
\end{theorem}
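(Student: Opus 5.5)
The plan is to work directly with the filter definition of $\mathcal{N}^s(K)$ being stable along $\Phi$ on $\{t \in I : t \geq t_0\}$. Fix $S \in \mathcal{N}^s(K)$, so there is an open $U$ with $K \subset U \subset S$. I will take as $S'$ a strict sublevel set $\set{x \in X \colon V(x) < c}$ for a suitable $c \in (0, \delta_0]$.

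First, $S'$ is a legitimate member of the filter. By \ref{ass_item:cont} it is open. By \ref{ass_item:stationary}, $V = 0$ on $K$, so $K \subset S'$ for every $c > 0$. Hence $S' \in \mathcal{N}^s(K)$.

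Second, invariance follows from monotonicity. For $t \geq t_0$ and $x \in S'$ we have
$V(\Phi(t,x)) \leq V(\Phi(t_0,x)) = V(x) < c$, using (iii) and $\Phi(t_0,\cdot) = \mathrm{id}$. Thus $\Phi(t,x) \in \set{V \leq c}$. It therefore suffices to choose $c$ with $\set{x \colon V(x) \leq c} \subset U$.

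The main obstacle is choosing $c$, and this is where \ref{ass_item:sublevel} enters. For $n$ with $1/n \leq \delta_0$, consider
\[
C_n \coloneqq \set{x \in X \colon V(x) \leq 1/n} \setminus U .
\]
Each $C_n$ is closed, since $V$ is continuous and $U$ is open. Each $C_n$ is also contained in the compact set $\set{V \leq \delta_0}$, so it is compact, and no Hausdorff assumption is needed for this. The sets $C_n$ are decreasing in $n$. Their intersection consists of points with $V(x) = 0$ lying outside $U$. By \ref{ass_item:stationary} such points lie in $K \subset U$, so the intersection is empty. By the finite intersection property in the compact space $\set{V \leq \delta_0}$, some $C_N$ is empty, that is, $\set{V \leq 1/N} \subset U \subset S$. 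Taking $c = 1/N$ finishes the proof.

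If $K = \emptyset$ the argument still works: $C_n$ is eventually empty, so $\set{V \leq c} = \emptyset$ for small $c$. First-countability is not needed in this route. If one prefers a sequential argument instead (as may be convenient in Lean), first-countability is what makes compactness of the sublevel set yield convergent subsequences. In that version one would pick $x_n \in C_n$ and extract a cluster point with $V = 0$ outside $U$, which gives the same contradiction.
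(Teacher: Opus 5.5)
Your proof is correct, but it handles the one substantive step differently from the paper: showing that some sublevel set of $V$ fits inside a given open $U \supset K$.

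The paper argues by contradiction with sequences. It picks $a(n)\notin S$ with $V(a(n))\le (n+1)^{-1}$ and extracts a convergent subsequence from compactness of $\{V\le\delta_0\}$. The limit then has $V=0$, hence lies in $K$, which contradicts $S$ being a neighborhood of $K$. Extracting a convergent subsequence from a compact set is exactly where first-countability enters, since compact plus first-countable gives sequential compactness.

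You instead apply the finite intersection property to the nested closed sets $C_n=\{V\le 1/n\}\setminus U$ inside the compact set $\{V\le\delta_0\}$. This uses no countability, so your argument shows that the first-countability hypothesis can be dropped. As you hint, even the sequential route becomes hypothesis-free if one takes a cluster point of $(a(n))$ instead of a convergent subsequence, because each $C_m$ is closed and contains $a(n)$ for all $n\ge m$.

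The remaining differences are cosmetic. The paper phrases the first half as ``the closed sublevel sets $\{V\le\delta\}$ form a basis of $\mathcal{N}^s(K)$'' and then uses their forward invariance. You take the open set $\{V<c\}$ directly as $S'$. Both rely on the same step $V(\Phi(t,x))\le V(\Phi(t_0,x))=V(x)$.

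The paper's route mirrors the classical sequential proof and isolates the construction of the sequence $a$, which the authors find convenient for formalization. Yours gives a more general theorem, valid for arbitrary topological $X$, at no extra cost, since the intersection lemma for nested closed subsets of a compact set is just as standard.
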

\begin{proof}
    Pick a $V$ that fulfills the assumptions.
    Let $\delta > 0$ and denote by
    $S_\delta \coloneqq \set{x \in X \colon V(x) \leq \delta}$ the sublevel sets
    of $V$.
    First we claim that $S_\delta$ is a basis for the filter
    $\mathcal{N}^s(K)$.
    For this we have to show that for every $\delta$,
    $S_\delta \in \mathcal{N}^s(K)$ and for every $S \in \mathcal{N}^s(K)$ there
    exists a $\delta > 0$ such that $S_\delta \subset S$. The first part follows
    directly from the fact that $V(x) = 0$ if and only if $x \in K$ and
    continuity of $V$.
    The second part is a proof by contradiction. Assuming for
    every $\delta > 0$, $S_\delta$ is not contained in $S$, we construct a
    sequence $a : \NN \to X$ such that $V(a(n)) \leq (n + 1)^{-1}$ and $a(n)
    \not \in S$ for all $n$.
    For all $n \geq N$, we have that $a(n) \in
    S_{\delta_0}$, so there exists (by compactness of $S_{\delta_0}$) a
    convergent subsequence $a(n_k)$ and $V(a(n_k)) \to 0$ as $k \to \infty$.
    This implies that $a(n_k) \to K$, which is a contradiction to the assumption
    that $a(n) \not \in S$ for all $n$ and $S$ being a neighborhood of $K$.

    The assumption that $V$ is non-increasing along $\Phi$ now implies that
    $\Phi$ maps $S_\eps$ to itself for $\eps \in (0,\delta_0)$, which proves the
    theorem.
\end{proof}
\begin{remark}
    The compactness of $K$ is a consequence of assumptions
    \ref{ass_item:cont}, \ref{ass_item:stationary}, and \ref{ass_item:sublevel}. 
\end{remark}
\begin{remark}
    This proof is theoretically interesting since most of the argument is purely about sublevel
    sets of $V$ and not about $\Phi$.
    In particular, at no stage was used that the map $\Phi$ is continuous,
    therefore this argument applies to hybrid systems, where the flow might be
    non-continuous.
\end{remark}
\begin{remark}
    This proof is easier to formalize than the usual proof, because it completely avoids any norm
    estimates and isolates the crucial part of the argument (the construction of the sequence $a$).
\end{remark}
The formalized version for sets reads in Lean as follows:
\begin{lstlisting}[mathescape]
theorem IsLyapunov.isStableOn_nhdsSet
    (h : IsLyapunov V Φ) (hVK : ∀ x, V x = 0 ↔ x ∈ K)
    (h_id : ∀ x, Φ t₀ x = x) {δ₀ : ℝ} (hδ₀ : 0 < δ₀)
    (h_cpt : IsCompact { x | V x ≤ δ₀ }) :
    (𝓝${}^s$ K).IsStableOn Φ (Ici t₀)
\end{lstlisting}
Here, \lstinline{IsLyapunov V Φ} is the proposition that \lstinline{V} is
continuous, non-negative, and is non-increasing along \lstinline{Φ}.

\subsection{Input-output maps}
The second part is analysis of input-output systems.
Throughout let $E, F$ be normed vector spaces.

Consider the space of locally $L^p$ functions, $L^p_{\loc}$.
An input-output system is simply a map
\begin{align}
    G : L^p_{\loc}(\RR_+; E) \to L^p_{\loc}(\RR_+; F)\,.
\end{align}
We also consider the truncation operator
\begin{align}
P_t : L^p_{\loc}(\RR_+; E) \to L^p(\RR_+; E)
\end{align}
given by
\begin{align}
    (P_t u)(\tau) = \begin{cases} u(\tau) & \tau \leq t \\ 0 & \text{otherwise} \end{cases}\,.
\end{align}

\begin{definition}
An input-output map is called \emph{finite gain stable} if there exist $\gamma, \beta \geq 0$ such
that for all $u \in L^p_{\loc}(\RR_+; E)$ and $t > 0$, we have
\begin{align}
    \norm{P_t G(u)}_{L^p} \leq \gamma \norm{P_t u}_{L^p} + \beta\,.
\end{align}
\end{definition}

The main advantage of considering input-output systems is to be able to prove stability of a
complicated system by decomposing it into smaller simple systems. It is easy to see that the
composition of two finite gain stable systems is again finite gain stable. The more complicated
connection is the negative feedback connection (see Fig.~\ref{fig:feedback}) and the result that
negative feedback connections are finite gain stable is the \emph{small-gain theorem}. First, we
will prove the small-gain theorem without assuming that the feedback connection is well-posed. This
means in particular, that we have to relax the definition of finite gain stability.
\begin{figure}
\centering
\scalebox{0.8}{\input{feedback2}}
\caption{A feedback connection of input-output maps $G_1$ and $G_2$.}
\label{fig:feedback}
\end{figure}

Recall that every function $f : X \to Y$ defines a subset of $X \times Y$, the \emph{graph}, given by
\begin{align*}
    \operatorname{graph}(f) = \set{(x, y) \colon y = f(x)}\,.
\end{align*}
We can then generalize the notion of finite gain stability to subsets of
$L^p_{\loc}(\RR_+; E) \times L^p_{\loc}(\RR_+; F)$ (which we will call relations in this context):
\begin{definition}
    A relation $R \subset L^p_{\loc}(\RR_+; E) \times L^p_{\loc}(\RR_+;F)$ is
    \emph{finite gain stable} if there exist $\gamma, \beta \geq 0$
    such that for all $(u, y) \in R$ and $t \geq 0$, we have
    \begin{align*}
        \norm{P_t y}_{L^p} \leq \gamma \norm{P_t u}_{L^p} + \beta\,.
    \end{align*}
\end{definition}
It is clear that the graph of a map is finite gain stable if and only if the map itself is finite
gain stable.
Given two maps $G_1, G_2$, we define the relations
\begin{align}
    R_1 &\coloneqq \set{ (e, u) \colon G_1(u_1) = u_2 - e_2,\, G_2(u_2) = e_1 -
    u_1}\,, \label{eq:R1}\\
    R_2 &\coloneqq \set{ (e, y) \colon G_1(e_1 - y_2) = y_1,\, G_2(e_2 + y_1) =
    y_2}\,, \label{eq:R2}
\end{align}
where $e = (e_1, e_2), u = (u_1, u_2)$, and $y = (y_1, y_2)$.
Well-posedness of the feedback connection is  the statement that $R_1$ and $R_2$ are the graphs of
maps.

Now, we state the small-gain theorem without the commonly stated well-posedness
assumption:
\begin{theorem}[Small-gain Theorem]\label{thm:small_gain}
    Let $G_1 : L^p_{\loc}(\RR_+; E) \to L^p_{\loc}(\RR_+; F)$ and $G_2 : L^p_{\loc}(\RR_+; F) \to L^p_{\loc}(\RR_+; E)$.
    Assume that for $j \in \set{1,2}$, $G_j$ is finite gain stable 
    with gain $\gamma_j$ and bias $\beta_j$.
    If $\gamma_1 \gamma_2 < 1$, then the relations $R_1$ and $R_2$ are finite gain stable.
\end{theorem}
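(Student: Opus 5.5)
The plan is to reduce everything to the scalar bounds obtained from finite gain stability of $G_1$ and $G_2$, applied along the given pair at a fixed truncation time $t \ge 0$. Finite norms are what make the argument work without well-posedness. Each $P_t$ maps into $L^p$, so for every element of $R_1$ or $R_2$ the truncated signals have finite $L^p$ norms.

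\emph{The relation $R_1$.} Fix $(e,u) \in R_1$ and $t \ge 0$, so $u_2 = e_2 + G_1(u_1)$ and $u_1 = e_1 - G_2(u_2)$.
\begin{itemize}
\item Since $P_t$ is linear and the norm satisfies the triangle inequality,
\begin{align*}
\norm{P_t u_1} \le \norm{P_t e_1} + \norm{P_t G_2(u_2)} \le \norm{P_t e_1} + \gamma_2 \norm{P_t u_2} + \beta_2\,,
\end{align*}
and similarly $\norm{P_t u_2} \le \norm{P_t e_2} + \gamma_1 \norm{P_t u_1} + \beta_1$.
\item Substituting the second bound into the first gives
\begin{align*}
\norm{P_t u_1} \le \norm{P_t e_1} + \gamma_2\norm{P_t e_2} + \gamma_2\beta_1 + \beta_2 + \gamma_1\gamma_2 \norm{P_t u_1}\,.
\end{align*}
\item Because $\norm{P_t u_1} < \infty$ and $\gamma_1\gamma_2 < 1$, we may subtract and divide by $1-\gamma_1\gamma_2 > 0$. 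This yields an affine bound on $\norm{P_t u_1}$ in terms of $\norm{P_t e_1}$ and $\norm{P_t e_2}$.
\item Plugging that bound back into the estimate for $\norm{P_t u_2}$ gives the symmetric bound.
\end{itemize}

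\emph{Passing to the product space.} We still have to convert these componentwise bounds into the norm on the product. I would use $\norm{P_t e_j} \le \norm{P_t e}$ and $\norm{P_t u} \le \norm{P_t u_1} + \norm{P_t u_2}$. Both hold for any reasonable product norm, such as the sup norm, up to a constant absorbed into $\gamma$. This gives the constants
\begin{align*}
\gamma = \frac{(1+\gamma_1)(1+\gamma_2)}{1-\gamma_1\gamma_2}\,,\qquad \beta = \frac{\beta_1+\beta_2+\gamma_1\beta_2+\gamma_2\beta_1}{1-\gamma_1\gamma_2}\,.
\end{align*}

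\emph{The relation $R_2$.} There are two routes.
\begin{itemize}
\item \emph{Reduction to $R_1$.} If $(e,y) \in R_2$, set $u_1 = e_1 - y_2$ and $u_2 = e_2 + y_1$. Then $(e,u) \in R_1$ with $y_1 = G_1(u_1)$ and $y_2 = G_2(u_2)$. Finite gain stability of $G_j$ then gives $\norm{P_t y_j} \le \gamma_j \norm{P_t u_j} + \beta_j$, and the $R_1$ estimate finishes the proof.
\item \emph{Direct estimate.} Alternatively, the same two-inequality argument applies directly to $y_1, y_2$, since $\norm{P_t y_1} \le \gamma_1(\norm{P_t e_1} + \norm{P_t y_2}) + \beta_1$ and symmetrically for $y_2$.
\end{itemize}
The first route is cleaner in Lean.

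\emph{Main obstacle.} The delicate step is the absorption $\norm{P_t u_1}(1-\gamma_1\gamma_2) \le \dots$. It is valid only because the quantity being absorbed is finite. So the key lemma I would isolate is that $P_t$ maps $L^p_{\loc}$ into $L^p$ with finite norm. In a formalization where norms may take values in $[0,\infty]$ (for example \lstinline{eLpNorm}), finiteness must be carried explicitly before passing to real numbers.

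Beyond that, the only remaining work is bookkeeping:
\begin{itemize}
\item linearity of $P_t$ with respect to sums and differences, which needs equality almost everywhere of truncations;
\item elementary real-number algebra, handled by \lstinline{linarith} or \lstinline{nlinarith} after clearing the denominator.
\end{itemize}
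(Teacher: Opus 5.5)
Your proposal is correct and follows the paper's proof essentially step for step. It uses the same two componentwise estimates from finite gain stability of $G_1$ and $G_2$, the same mutual substitution and absorption using $1-\gamma_1\gamma_2>0$, the same comparison between the product norm and the sum of component norms, and the same reduction of $R_2$ to $R_1$ via $u_1=e_1-y_2$, $u_2=e_2+y_1$. Your remark that absorption needs $\norm{P_t u_1}<\infty$, which holds because $P_t$ maps $L^p_{\loc}$ into $L^p$, makes explicit what the paper leaves implicit. One harmless bookkeeping slip: the two inequalities you name, $\norm{P_t e_j}\le\norm{P_t e}$ and $\norm{P_t u}\le\norm{P_t u_1}+\norm{P_t u_2}$, give the gain $(2+\gamma_1+\gamma_2)/(1-\gamma_1\gamma_2)$, not the one you display. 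Your displayed gain $(1+\gamma_1)(1+\gamma_2)/(1-\gamma_1\gamma_2)$ is also valid, but it comes from first estimating $\norm{P_t u}\le\norm{P_t e}+\gamma_1\norm{P_t u_1}+\gamma_2\norm{P_t u_2}+\beta_1+\beta_2$; since the theorem only claims that some constants exist, this does not affect the argument.
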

\begin{proof}
    It is clear that if the relation $R_1$ is finite
    gain stable, then the relation $R_2$ is finite gain
    stable.
    Moreover, it suffices to show that there exist $\gamma, \beta$, such that for all
    $(e_1, e_2, u_1, u_2) \in R_1$, we have the estimate
    \begin{align*}
        \norm{P_t u_1}_{L^p} + \norm{P_t u_2}_{L^p} \leq \gamma (\norm{P_t e_1}_{L^p} + \norm{P_t e_2}_{L^p}) + \beta\,.
    \end{align*}

    We will now use the short-hand $u_t \coloneqq P_t u$ and omit the subscript of the norm.
    Using the equation of the feedback connection, $G_j$ being finite gain stable implies that for
    $u_1$ and $u_2$, we have
    \begin{align*}
        \norm{(u_1)_t} &\leq \norm{(e_1)_t} + \gamma_2 \norm{(u_2)_t} + \beta_2\,,\\
        \norm{(u_2)_t} &\leq \norm{(e_2)_t} + \gamma_1 \norm{(u_1)_t} + \beta_1\,.
    \end{align*}
    Inserting these equations into each other, we obtain
    \begin{align*}
        (1 - \gamma_1 \gamma_2) \norm{(u_1)_t} &\leq \norm{(e_1)_t} + \gamma_2 (\norm{(e_2)_t} + \beta_1) + \beta_2\,,\\
        (1 - \gamma_1 \gamma_2) \norm{(u_2)_t} &\leq \norm{(e_2)_t} + \gamma_1 (\norm{(e_1)_t} + \beta_2) + \beta_1\,.
    \end{align*}
    Since $1 - \gamma_1 \gamma_2 > 0$, the claimed inequalities follow.
\end{proof}
Note that this observation is not new (see for instance Zames~\cite{Zames} and van der
Schaft~\cite{vanDerSchaft}), but the formalization gets significantly simplified by using relations.

Now, we turn to the formalization of this version of the small-gain theorem in Lean.

First, we note that there are several different ways to discuss $L^p$ functions in \mathlib. Recall
that the space $L^p$ is defined not as functions, but equivalence classes of functions that are
equal almost everywhere (otherwise the norm would fail to be positive definite). Therefore the space
\lstinline{Lp E p μ} is defined as a subgroup of \lstinline{α →ₘ[μ] E}, the space of all
$\mu$-almost equal functions. Using almost everywhere equal functions is not very ergonomic since
pointwise operations are not well-behaved. For example, if \lstinline{f g : α →ₘ[μ] E} then
\lstinline{(f + g) x} is not equal to \lstinline{f x + g x}. Using the tactic
\lstinline{filter_upwards} it is possible to use pointwise operations to deduce almost everywhere
equality, but this cannot be combined with more powerful automation, such as \lstinline{ring} to
automatically prove equalities. For this reason, we use bare functions \lstinline{α → E} together
with the predicate \lstinline{MemLp p μ} and we introduce a new predicate \lstinline{MemLpLoc p μ}
for local $L^p$ functions. As a consequence, we have to adjust the definition of input-output
properties. Namely, if an input-output map is now simply a map \lstinline{(α → E) → (α → F)}, then
we have to impose the condition that it maps (local) $L^p$ functions to (local) $L^p$ functions on
the level of the predicates. A second issue arising from the use of bare functions is that we do not
have access to the norm on $L^p$ functions. The replacement is the \emph{extended} $L^p$ norm
\lstinline{eLpNorm u p μ : ℝ≥0∞} that takes values in the extended non-negative reals
$\overline{\RR_+} \coloneqq \RR_+ \cup \set{\infty}$.

Thus, the definition of finite gain stability for functions becomes
\begin{lstlisting}[mathescape]
structure IsFiniteGainStableWith
    (f : (α → E) → (α → F)) (γ β : ℝ≥0) (s : ι → Set α)
    (p : ℝ≥0∞) (μ : Measure α) where
  memLpLoc :
    ∀ ⦃u⦄, MemLpLoc u p μ → MemLpLoc (f u) p μ
  stableWith : ∀ t u (_hu : MemLpLoc u p μ),
    eLpNorm (f u) p (μ.restrict <| s t) ≤
    γ * eLpNorm u p (μ.restrict <| s t) + β
\end{lstlisting}
A \lstinline{structure} in Lean is a record-type data type that can be used for bundling together
propositions (as it is done here) or a mixture of data and propositions. In particular, if all
fields of a \lstinline{structure} are \lstinline{Prop}-valued, then the \lstinline{structure} is
also \lstinline{Prop}-valued. We follow the \mathlib convention that recommends prefixing
\lstinline{Prop}-valued structures and definitions with \lstinline{Is}.

The first field of the structure asserts that \lstinline{f} maps local $L^p$ functions to local
$L^p$ functions. Taking the \lstinline{eLpNorm} of \lstinline{u} with respect to the restricted
measure \lstinline{μ.restrict s} is the same as taking the \lstinline{eLpNorm} of
\lstinline{s.indicator u} with respect to the measure \lstinline{μ}, but the former is in some cases
easier to work with, so we use that in the definition.

With these definitions, the proof that the composition of finite gain stable maps is again finite
gain stable is remarkably simple:
\begin{lstlisting}[mathescape]
theorem comp
    (hg : g.IsFiniteGainStableWith γ' β' s p μ)
    (hf : f.IsFiniteGainStableWith γ β s p μ) :
    (g ∘ f).IsFiniteGainStableWith (γ * γ') (β * γ' + β') s p μ where
  memLpLoc u hu := hg.memLpLoc (hf.memLpLoc hu)
  stableWith t u hu := calc
    _ ≤ γ' * eLpNorm (f u) p _ + β' :=
      hg.stableWith t (f u) (hf.memLpLoc hu)
    _ ≤ γ' * (γ * eLpNorm u p _ + β) + β' := by
      gcongr; exact hf.stableWith t u hu
    _ = _ := by
      push_cast; ring
\end{lstlisting}

Now we turn to the formalization of feedback connections and the small-gain theorem.
First, the definition of finite gain stability of a relation is  straightforward:
\begin{lstlisting}[mathescape]
def IsFiniteGainStableWith
    (f : SetRel (α → E) (α → F)) (γ β : ℝ≥0)
    (s : ι → Set α) (p : ℝ≥0∞) (μ : Measure α) : Prop :=
  ∀ t u y (_hu : MemLpLoc u p μ) (_hy : MemLpLoc y p μ)
    (_h : (u, y) ∈ f), eLpNorm y p (μ.restrict <| s t) ≤
    γ * eLpNorm u p (μ.restrict <| s t) + β
\end{lstlisting}
We remark that we do \emph{not} assume for a finite gain stable relation $R$ that if $(u, y) \in R$
and $u \in L^p_{\loc}$, then $y \in L^p_{\loc}$.

Next, we define a \lstinline{structure} for the feedback connection, which is purely given in terms
of relations:
\begin{lstlisting}[mathescape]
structure SetRel.closedLoop where
  topRel : SetRel (α → E) (α → F)
  botRel : SetRel (α → F) (α → E)
\end{lstlisting}
where \lstinline{topRel} corresponds to $G_1$ and \lstinline{botRel} corresponds to $G_2$.
The relations $R_1$ and $R_2$ of \eqref{eq:R1} and \eqref{eq:R2} are then given by
\begin{lstlisting}[mathescape]
def inputState (loop : SetRel.closedLoop α E F) :
    SetRel (α → E × F) (α → E × F) :=
  {(e, u) | (fst ∘ u, snd ∘ u - snd ∘ e) ∈ loop.topRel ∧
    (snd ∘ u, fst ∘ e - fst ∘ u) ∈ loop.botRel }
\end{lstlisting}
\begin{lstlisting}[mathescape]
def inputOutput (loop : SetRel.closedLoop α E F) :
    SetRel (α → E × F) (α → F × E) :=
  {(e, y) | (fst ∘ e - snd ∘ y, fst ∘ y) ∈ loop.topRel ∧
    (snd ∘ e + fst ∘ y, snd ∘ y) ∈ loop.botRel }
\end{lstlisting}
if \lstinline{loop.topRel} is the graph of $G_1$ and \lstinline{loop.botRel} is the graph of $G_2$.

There is a subtle question about the norm on the product: if \lstinline{E} and \lstinline{F} are
normed spaces, then \lstinline{E × F} is a normed space equipped with the max-norm. However, in the
small-gain theorem we want to use the norm $L^p(\RR_+; E \times F)$. To do this, we use the
type-synonym \lstinline{WithLp} that equips the product with the $L^p$-norm and we define variants
\lstinline{inputStateLp} and \lstinline{inputOutputLp} that use \lstinline{WithLp p (E × F)}. We
also need the following two estimates that relate the \lstinline{eLpNorm} of
\lstinline{x ↦ (f x, g x)} to the \lstinline{eLpNorm} of \lstinline{f} and \lstinline{g}:

\begin{lstlisting}[mathescape]
theorem eLpNorm_withLp_prod_le_add (hp : 1 ≤ p)
    (hf : AEStronglyMeasurable f μ) :
    eLpNorm (fun x ↦ WithLp.toLp p (f x, g x)) p μ ≤ eLpNorm f p μ + eLpNorm g p μ
\end{lstlisting}
\begin{lstlisting}[mathescape]
theorem add_le_eLpNorm_withLp_prod (hp : 1 ≤ p)
    (hf : AEStronglyMeasurable f μ) :
    eLpNorm f p μ + eLpNorm g p μ ≤ addLEConst p *
      eLpNorm (fun x ↦ WithLp.toLp p (f x, g x)) p μ
\end{lstlisting}

With these lemmas we can prove the small-gain theorem without 
assuming any well-posedness of the feedback connection as stated below in Lean.
\begin{lstlisting}[mathescape]
theorem inputStateLp_isFiniteGainStableWith
    (hp : 1 ≤ p) (hG₁ : G₁.graph = loop.topRel)
    (hG₁' : G₁.IsFiniteGainStableWith γ₁ β₁ s p μ)
    (hG₂ : G₂.graph = loop.botRel)
    (hG₂' : G₂.IsFiniteGainStableWith γ₂ β₂ s p μ)
    (hγ : γ₁ * γ₂ < 1)
    (ht : ∀ t, MeasurableSet (s t) ∧ IsBounded (s t)) :
    (loop.inputStateLp p).IsFiniteGainStableWith
    (inputStateLoopGainLp p γ₁ γ₂)
    (loopBias γ₁ γ₂ β₁ β₂) s p μ
\end{lstlisting}
\begin{lstlisting}[mathescape]
theorem inputOutputLp_isFiniteGainStableWith
    (hp : 1 ≤ p) (hG₁ : G₁.graph = loop.topRel)
    (hG₁' : G₁.IsFiniteGainStableWith γ₁ β₁ s p μ)
    (hG₂ : G₂.graph = loop.botRel)
    (hG₂' : G₂.IsFiniteGainStableWith γ₂ β₂ s p μ)
    (hγ : γ₁ * γ₂ < 1)
    (ht : ∀ t, MeasurableSet (s t) ∧ IsBounded (s t)) :
    (loop.inputOutputLp p).IsFiniteGainStableWith
    (inputOutputLoopGainLp p γ₁ γ₂)
    (loopBias γ₁ γ₂ β₁ β₂) s p μ
\end{lstlisting}
The three constants \lstinline{inputStateLoopGainLp}, \lstinline{inputOutputLoopGainLp}, and
\lstinline{loopBias} are explicitly given and the first two provide bounds on the loop gain.

The first theorem states that the relation $R_1$ is finite gain stable and the second theorem
states that $R_2$ is finite gain stable, which are exactly the two claims in
Theorem~\ref{thm:small_gain}.

\section{Library architecture, Documentation, and Mathlib integration}

Currently, we have four top-level folders: \texttt{Basic}, \mathlib, \texttt{InputOutput}, and
\texttt{Stability}.
The \texttt{Basic} folder collects all abstract definitions and theorems, such as comparison
functions, abstract definitions of solution operators, and local $L^p$ functions.
The \texttt{Mathlib} folder contains results that are planned or in the process of being upstreamed
to \mathlib, such as missing lemmas in topology and ODEs.
The two remaining folders, \texttt{InputOutput} and \texttt{Stability}, contain the results on
input-output analysis and stability of dynamical systems, respectively.

In addition to the small-gain theorem, the folder on input-output analysis contains definitions of
causality, $L^p$-stability, and passive input-output systems.
We also give a characterization of finite gain stability for causal systems.

The folder on stability theory of dynamical systems contains various versions of Lyapunov's
theorem, proving stability for sets and points.
We also provide variants of the theorem for the case that the flow $\Phi$ is
given by the solution to the ODE $\dot{x} = f(x)$ and show that
the decay can be deduced from the negativity of the Lie derivative of $V$ with respect to $f$.
We define invariant sets and limit sets, and we prove LaSalle's invariance principle using the same
argument as in Cohen--Rouhling~\cite{CoRo}.
As in the case of Lyapunov's theorem, we have both a variant of LaSalle's principle for asymptotic
stability of sets and points.

For checking the usability of the definitions and theorems, we provide simple examples for both the
Lyapunov stability and input-output stability.
We prove that the origin for the dynamical system $\dot{x} = -r x$ is stable for $r \geq 0$ and
asymptotic stable for $r > 0$, as well as that a global minimum of a Hamiltonian dynamical system
is stable.
For the input-output analysis, we prove that the multiplication operator with an almost everywhere
bounded function is finite gain stable.

Documentation is a crucial part of any software library.
In this case, documentation can also serve as a reference for machine-checked theorems in control
theory.
We use \href{https://verso.lean-lang.org/}{Verso}\footnote{\url{https://verso.lean-lang.org/}} for
documentation, which can be found at \cite{LeanDynamicalSystems}.
Verso highlights some of the immediate advantages of digitalized mathematics:

All code examples in the documentation are checked by the Lean kernel, so if a proof is presented in
the documentation it is correct. It is possible to hover over definitions, theorems, and tactics to
see their documentation. Moreover, since the documentation is part of the library and checked during
continuous integration, we avoid the case that the documentation and the \texttt{main} branch
diverge, so for example if one commit renames a theorem that is mentioned in the documentation, CI
would fail to build the documentation unless the documentation is updated.

One advantage of the Lean ecosystem is that most mathematical objects are available from a single
integrated source, \mathlib. This avoids having to choose a particular implementation of a basic
object such as the real numbers, as opposed to~\cite{CoRo}. However, \mathlib does not yet have all
mathematics needed for formalizing dynamical system theory. We had to develop our own definition of
locally $L^p$ spaces and fill small gaps in the API for the \lstinline{nhdsSet} filter. We are
working on upstreaming these results to \mathlib.

Moreover, \mathlib is missing global existence results for ODEs and therefore it is currently not
possible to prove a sorry-free result, i.e., a proof with no admitted gaps, about stability of
fixed points of ODEs. On the other hand, \mathlib does not have the Carath\'eodory existence result
which is crucial for defining input-output systems from state systems. We plan to develop these
theories and move them to \mathlib eventually.

\section{Conclusion}

We laid the foundation for a monolithic library of formalized control theory that covers both
functional-analytic as well as calculus-based approaches. Using Lean and \mathlib has the advantage
of being able to employ a broad range of formalized pure mathematics that is important for this
task. Even though \mathlib currently does not contain all mathematics needed in control theory, its
use of abstractions makes it possible to develop the missing pieces and contribute results back to
\mathlib. We approach the formalization of applied mathematics in a similar abstract framework to
simplify the formalization and find the most general versions of the definitions and theorems used.
We hope that this abstract approach to applied mathematics also leads to new results in the area and
accelerates the development of trusted control systems. More concretely, our next immediate goal is
to expand the library by formalizing passivity theorems.

%% file: feedback2.tex
\begin{tikzpicture}

\tikzstyle{block} = [rectangle, minimum width=3cm, minimum height=1cm, text centered, draw=black]
\tikzstyle{sum} = [circle, minimum size=0.7cm, text centered, draw=black]
\tikzstyle{outDot} = [circle, minimum size=0.01cm, fill=black]

\tikzstyle{arrow} = [thick,->,>=stealth]

\node[sum] (topSum) {$-$};
\node[left=of topSum] (topIn) {$e_1$};
\node[block, right=of topSum] (G1) {$G_1$};
\node[block, below=of G1] (G2) {$G_2$};
\node[sum, right=of G2] (botSum) {$+$};
\node[right=of botSum] (botIn) {$e_2$};
\node[right=of G1] (topConn) {};
\node[right=of topConn] (topOut) {$y_1$};
\node[left=of G2] (botConn) {};
\node[left=of botConn] (botOut) {$y_2$};

%\node[outDot, right=of G1] (topOut) {};

%\draw[arrow] (2,-2) -- (botSum);
\draw[arrow] (topSum) -- (G1);
\draw[arrow] (G1) -| (botSum);
\draw[arrow] (botSum) -- (G2);
\draw[arrow] (G2) -| (topSum);
\draw (topOut) -- (topConn);
\draw (botOut) -- (botConn);
\draw[arrow] (botIn) -- (botSum);
\draw[arrow] (topIn) -- (topSum);

\end{tikzpicture}

%% file: FormalStability_arxiv.bbl
\begin{thebibliography}{10}

\bibitem{BiAn2025}
M.~Bin and D.~Angeli.
\newblock On an abstraction of lyapunov and lagrange stability.
\newblock {\em IFAC-PapersOnLine}, 59(19):412--417, 2025.
\newblock 13th IFAC Symposium on Nonlinear Control Systems NOLCOS 2025.

\bibitem{BiPa2023}
M.~Bin and T.~Parisini.
\newblock A small-gain theory for abstract systems on topological spaces.
\newblock {\em IEEE Transactions on Automatic Control}, 68(8):4494--4507, 2023.

\bibitem{Bourbaki}
N.~Bourbaki.
\newblock {\em General Topology. {Chapters} 1--4}.
\newblock Elements of Mathematics (Berlin). Springer-Verlag, Berlin, 1998.

\bibitem{CoRo}
C.~Cohen and D.~Rouhling.
\newblock A formal proof in {Coq} of {LaSalle}'s invariance principle.
\newblock In {\em Interactive Theorem Proving (ITP 2017)}, pages 148--163.
  Springer, 2017.

\bibitem{Lean}
L.~de~Moura and S.~Ullrich.
\newblock The {Lean} 4 theorem prover and programming language.
\newblock In {\em Automated Deduction -- {CADE}-28}, volume 12699 of {\em
  Lecture Notes in Comput. Sci.}, pages 625--635, Cham, 2021. Springer.

\bibitem{LeanDynamicalSystems}
M.~Doll and I.~Shames.
\newblock Nonlinear dynamical systems \& control theory.
\newblock \url{https://mcdoll.github.io/DynamicalSystems/}, 2026.

\bibitem{KeYmaeraX}
N.~Fulton, S.~Mitsch, J.-D. Quesel, M.~V{\"o}lp, and A.~Platzer.
\newblock {KeYmaera X}: An axiomatic tactical theorem prover for hybrid
  systems.
\newblock In {\em Automated Deduction -- {CADE}-25}, volume 9195 of {\em
  Lecture Notes in Comput. Sci.}, pages 527--538, Cham, 2015. Springer.

\bibitem{CSLib}
C.~Henson and F.~Montesi.
\newblock Computer science as infrastructure: the spine of the {Lean} computer
  science library ({CSLib}).
\newblock arXiv:2602.15078, 2026.

\bibitem{JaVe}
O.~A. Jasim and S.~M. Veres.
\newblock Towards formal proofs of feedback control theory.
\newblock In {\em 21st International Conference on System Theory, Control and
  Computing (ICSTCC)}, pages 43--48, 2017.

\bibitem{Khalil}
H.~K. Khalil.
\newblock {\em Nonlinear Systems}.
\newblock Macmillan, New York, 1992.

\bibitem{VeriDrone}
G.~Malecha, D.~Ricketts, M.~M. Alvarez, and S.~Lerner.
\newblock Towards foundational verification of cyber-physical systems.
\newblock In {\em 2016 Science of Security for Cyber-Physical Systems Workshop
  (SOSCyPS)}, pages 1--5, 2016.

\bibitem{Mathlib}
{The mathlib Community}.
\newblock The {Lean} mathematical library.
\newblock In {\em Proceedings of the 9th {ACM} {SIGPLAN} International
  Conference on Certified Programs and Proofs ({CPP} 2020)}, pages 367--381,
  New York, NY, USA, 2020. Association for Computing Machinery.

\bibitem{vanDerSchaft}
A.~J. van~der Schaft.
\newblock {\em {$L_2$}-Gain and Passivity Techniques in Nonlinear Control}.
\newblock Communications and Control Engineering. Springer, London, 3 edition,
  2016.

\bibitem{Wadler}
P.~Wadler.
\newblock Propositions as types.
\newblock {\em Communications of the ACM}, 58(12):75--84, 2015.

\bibitem{Zames}
G.~Zames.
\newblock On the input-output stability of time-varying nonlinear feedback
  systems. {Part} {I}: Conditions derived using concepts of loop gain,
  conicity, and positivity.
\newblock {\em {IEEE} Transactions on Automatic Control}, 11(2):228--238, 1966.

\end{thebibliography}
